\theoremstyle{plain}
\newtheorem{theorem}{Theorem}[section]
\newtheorem{lemma}[theorem]{Lemma}
\newtheorem{proposition}[theorem]{Proposition}
\theoremstyle{definition}
\newtheorem{definition}[theorem]{Definition}
\def\Shape(#1){\operatorname{Shape}(#1)}
\newcommand{\magma}{{\sc Magma}}
\newcommand{\Z}{\mbox{\bf Z}}
\newcommand{\Q}{\mbox{\bf Q}}
\newcommand{\F}{\mbox{\bf F}}
\newcommand{\C}{\mbox{\bf C}}
\newcommand{\Ad}{A^\dagger}
\newcommand{\AdK}{A^\dagger\otimes_R K}
\newcommand{\Fpd}{F_p^{\dagger}}
\newcommand{\Prj}{\mathbb P}
\newcommand{\myim}{\mbox{Im}}
\begin{document}
\title{An extension of Kedlaya's algorithm for hyperelliptic curves.}
\author{Michael Harrison} 
\address{School of Mathematics and Statistics F07, University of Sydney, NSW 2006, Australia}
\keywords{Kedlaya's Algorithm, Monsky-Washnitzer Cohomology, Magma}
\begin{abstract}
In this paper we describe a generalisation and adaptation of Kedlaya's algorithm for
computing the zeta function of a hyperelliptic curve over a finite field
of odd characteristic that the author used for the implementation of the algorithm
in the \magma\ library. We generalise the algorithm to the case of an even
degree model. We also analyse the adaptation of working with the $x^idx/y^3$
rather than the $x^idx/y$ differential basis. This basis has the computational
advantage of always leading to an integral transformation matrix whereas the
latter fails to in small genus cases. There are some theoretical subtleties that
arise in the even degree case where the two differential bases actually lead to 
different redundant eigenvalues that must be discarded.
\end{abstract}
\maketitle

\section{Introduction}
Kedlaya's algorithm for hyperelliptic curves in odd characteristic was one of
the first practical computational algorithms for computing the zeta function
of a curve of genus greater than 1 over a large finite field of small
characteristic \cite{Ked01}, \cite{Ked04}. It was generalised by Denef and
Vercauteren to characteristic two \cite{DV1_06} and has also been
extended to more general curves like $C_{ab}$ curves \cite{DV2_06}.
Kedlaya's algorithm is based on the calculation of the Frobenius action on an
appropriate $p$-adic cohomology group that can be described in sufficiently concrete 
terms for explicit computer computations to be made. In the hyperelliptic case,
Kedlaya used Monsky-Washnitzer cohomology on the open affine subset of the curve
defined by the removal of all Weierstrass points.

In 2003, the author wrote the implementation of Kedlaya's algorithm in the standard
user library of
the \magma\ computer algebra system \cite{Mag97}. In practical terms, there 
appeared to be two issues with the algorithm as it stood.

Firstly, it only covered the odd degree case of a hyperelliptic model with a 
single 
point at infinity. Following Kedlaya's analysis, we extended the 
algorithm in a natural way to also cover the even degree case. The extension is
fairly straightforward and the algorithm runs as before except that a degree one
term has to be removed from the final characteristic polynomial corresponding to
an extra eigenvalue $q$ (the field size) arising from the extra
point at infinity removed from the complete curve.

More seriously, if $p$, the characteristic of the base finite field $\F_q$, is 
small compared to the genus $g$ of the hyperelliptic curve $C$ - specifically if
$p \le 2g-1$ in the odd degree case and $p \le g$ in the even degree case -
then the matrix $M$ representing the $\sigma$-linear transformation of
$p$-Frobenius on Kedlaya's chosen differential basis of cohomology is non-integral.
That is, the integral lattice generated by the basis is not stable under Frobenius.
Because $M$ has to be $\sigma$-powered to a large degree to get to the
final result, this presents obvious $p$-adic precision problems. If $M$ represented
a {\it linear} transformation, it could be easily replaced by an integral conjugate
before powering (though then the characteristic polynomial of the power could
be computed without matrix powering, anyway!), but the semi-linear situation is
not so easy to work with. This issue is remarked upon in \cite{Ked03} and can be 
dealt with in a number of ways. One approach is to try to analyse the situation
using high-powered techniques like crystalline cohomology (or general $F$-module
theory) to find an integral lattice to work with that is invariant under Frobenius.
For example, Edixhoven gives a general criterion
for stability under Frobenius of a sub $\Z_p$-module $L$ of the $\Z_p$-module of
differentials generated by Kedlaya's differential basis in Prop. 5.3.1 of \cite{Edi06}.
A full proof of the criterion can be found in \cite{Bog08}.
See also \cite{CDV06}, which is described further below, for more general plane curves.
\smallskip
 
In this very concrete situation, however, we computed that a slightly different 
differential basis for the minus part of the $H^1$ cohomology always works: namely 
differentials of the form $x^idx/y^3$ rather than $x^idx/y$.
The computation is again straightforward but, as far as we are aware, it has not 
appeared
in detail before in the literature so, for completeness, we will show that 
Kedlaya's 
reduction process applied to this space of differentials always leads to an 
integral matrix $M$.

The interesting technical point is that the $y^3$ differentials only form a basis
for the minus part of the cohomology in the odd degree case. In the even degree 
case,
the map from this space of differentials into $H^1_-$ actually has a 1-dimensional
kernel and cokernel. It turns out that the kernel has eigenvalue $1$ and cokernel
eigenvalue $q$ under $q$-Frobenius, so in this case we have to remove a factor of
$t-1$ rather than $t-q$ from the final characteristic polynomial. This is 
demonstrated in the final section of the paper.
\medskip

In summary, in the even degree case, one additional eigenvalue of Frobenius occurs on the
affine Monsky-Washnitzer cohomology because of the additional point removed at infinity.
This merely has to be removed at the end in order to get the numerator of the zeta function.
Our alternative set of differentials generate a $\Z_p$-module $V$ with Frobenius action.
This space genuinely gives an F-stable lattice in $H^1_-$ for odd degree and the algorithm goes
through as before, except with guaranteed $p$-integral matrices. For even degree, $V$ also
gives $p$-integrality but $V\otimes\Q$ doesn't quite coincide with $H^1_-$ as a Frobenius module.
However, an explicit analysis in this case shows that the difference between $V$ and $H^1_-$
results in just having to remove a different additional eigenvalue at the end. 
\medskip

A very general Kedlaya-style algorithm applying to non-degenerate plane curves is 
presented by
Castryck, Denef and Vercauteren in \cite{CDV06}. There, a deterministic algorithm is
given where a basis for cohomology is determined and an integrality analysis is
performed involving Edixhoven's criterion and consideration of the Newton polygon of the curve. 
The hyperelliptic
case, however, with its particular choices of differential bases, is still
an important special case amenable to the specific original analysis of Kedlaya
and that presented here, and I have had requests from a number of people to
publish details of the \magma\ implementation.
\medskip

We should also mention some of the other point-counting methods which have been
developed over the last decade for curves of genus greater than 1 and that use different techniques
to that of Kedlaya.

Generalising the elliptic curve case, Mestre devised
an algorithm for ordinary hyperelliptic curves in characteristic $2$ based on the theory of the 
{\it canonical lift}. This computes a $2$-adic approximation of a particular function
of the eigenvalues of Frobenius from which a finite number of possibilities for the
characteristic polynomial of Frobenius can be obtained by rational reconstruction in many
cases (e.g. when the Jacobian is irreducible). Again generalising their algorithm for the genus 1 case, 
Lercier and Lubicz found a way to efficiently effect the lifting stage to obtain a
quasi-quadratic algorithm \cite{LeLu06}. The author implemented this algorithm for the standard
\magma\ user library. Following the work of Robert Carls on theta structures of canonical lifts \cite{Car07}, 
Carls and Lubicz have generalised the algorithm to odd characteristic \cite{CaLu09}.

Another important $p$-adic method is the {\it deformation} method of Lauder and Wan \cite{LaWa08}.
This generalises from the curve case to higher-dimensional hypersurfaces and provides the basis
for the computation of zeta-functions of fairly general varieties over finite fields. The ideas go
back to Dwork and use his approach to $p$-adic cohomology theory, working with parametrised families of  
hypersurfaces and continuously deforming to ones of special form (diagonal in Dwork's original work).
R. Gerkmann has further studied the method, considering relations to rigid cohomology and practical
$p$-adic precision analysis \cite{Ger07}. He has written an implementation in \magma. Fuller details
for the deformation method in the particular case of hyperelliptic curves have been worked out by
H. Hubrechts \cite{Hub08} who provided the implementation that appears in the standard \magma\ user library.
\medskip

A brief outline of the paper is as follows. In the next section, we introduce basic notation,
summarise Kedlaya's original algorithm and describe our extension of it. We also give a brief
overview of Monsky-Washnitzer cohomology and explain Kedlaya's reduction procedure on
differentials which remains formally the same in the extended version.

In Section 3, we consider our alternative (pseudo)-basis and give a proof of the
integrality of the reduction of the image of $p$-Frobenius on its elements alongside an
analysis of Kedlaya's original basis. We also give the short proof of the generalisation
of the point-counting formula to even degree hyperelliptic models.

Finally, in the last section we give proofs of the slightly more technical result relating
the space spanned by our pseudo-basis to its image in Monsky-Washnitzer cohomology and
giving the difference between the eigenvalues of Frobenius on these two spaces.  
\bigskip

\noindent{\bf Acknowledgements\ }  I would like to thank the referees for many useful suggestions.
\bigskip
                  
\section{Review of Kedlaya's Algorithm}

In this section we give a summary of Kedlaya's algorithm as it appears in 
\cite{Ked01} as well as describing our extension of it. The basic notation introduced below
will be used throughout the paper.
\bigskip

\noindent\underline{Basic notation.}
\medskip

Throughout, $q := p^n$ will denote a positive power of an odd prime $p$.
$k$ will denote the finite field $\F_q$, unless otherwise indicated. $R$
will denote $W(k)$, the ring of integers of $K$, the unique unramified
degree $n$ extension of the local field $\Q_p$. $\sigma_p$ will denote the
$p$-Frobenius automorphism of $R$ or $K$ that reduces to $a \mapsto a^p$
on $k$.

$C$ will denote the hyperelliptic curve which is the projective
normalisation of the smooth plane affine curve $C_1$ with defining equation
$$ y^2 = Q(x) $$
where $Q(x) = a_dx^d+\ldots +a_0$ is a separable polynomial of degree $d$ in
$k[x]$. To simplify notation, we also use $Q(x)$ to denote some arbitrary
lift of $Q(x)$ to $R$ (i.e. a degree $d$ polynomial over $R$ such that reduction
mod $p$ of the coefficients gives $Q(x)$). It will always be clear from the
context which polynomial is being referred to.

We let $g$ denote the genus of $C$, so that $d=2g+1$ or $d=2g+2$. We refer to
the $d=2g+1$ case as the odd case and the $d=2g+2$ case as the even case. In
the odd case $C\backslash C_1$ consists of a single $k$-rational point, which is
a Weierstrass point of $C$ and will sometimes be referred to as $\infty$.
In the even degree case, $C\backslash C_1$ consists of a pair of non-Weierstrass
points, $\infty_1$ and $\infty_2$, which are either $k$-rational or conjugate
points over $\F_{q^2}$. Computationally, it is easiest to transform the initial
$Q$ over $k[x]$ so that $a_d = 1$ and the lift to $R$ of $a_d$ is also $1$. This
may involve working with the quadratic twist of $C$ in the even case, but there
is no problem converting back the final result (by substituting $t \mapsto -t$ in
the numerator of the zeta function). So from now on, we assume that $a_d$ is $1$
and $C$ has two $k$-rational points at infinity in the even case.

Following Kedlaya, we define $C^{a}$ as the open affine subset of $C_1$
given by inverting $y$; i.e. $C^{a} = Spec(A_k)$ where
$$ A_k := k[x,y,y^{-1}]/(y^2-Q(x)) $$
and we will let
$A_R := R[x,y,y^{-1}]/(y^2-Q(x))$
which is a finitely-generated, $R$-smooth $R$-algebra with
$A_R\otimes_R k \simeq A_k$. $C^{a}$ is just $C$ with all Weierstrass points
and points at infinity removed.
\bigskip

\noindent\underline{Basic outline of the algorithm.}
\medskip

Given an odd degree model of a hyperelliptic curve $C$ over $\F_q$ as above, Kedlaya's 
algorithm computes the degree $2g$ monic polynomial $L(X)$ that gives the numerator
of the zeta-function of $C$ [$\zeta_C(s) = L(q^{-s})/(1-q^{-s})(1-q^{1-s})$]. The
number of points on $C$, $\#C(\F_{q^r})$, or the order of its Jacobian,
$\#Jac(C)(\F_{q^r})$, over any finite extension $\F_{q^r}$ of the base field can be
simply computed from $L(X)$ in the usual way (e.g. see Appendix C \cite{H77}).

The main stages are given in Algorithm 1.
\medskip

\begin{algorithm}
\caption{Kedlaya's original algorithm}

\begin{description}
\item[Step 0] Input $Q(x)$.
\item[Step 1] Working in $W(k)[x][[1/y]]$, compute $(1/y^\sigma)$ to sufficiently
large $p$-adic and $(1/y)$-adic precision by formally expanding
$$ y^{-p}\left(1+{{Q(x)^\sigma-Q(x)^p}\over{y^{2p}}}\right)^{-1/2}$$
This gives a finite approximation of the image of the differential
basis of cohomology $x^i(dx/y), 0 \le i \le d-2$ under $p$-Frobenius.
\item[Step 2] Reexpress the images of the differentials as $W(k)\otimes\Q$-linear
combinations of the differential basis using the {\tt RednA} and {\tt RednB}
reduction processes described below. This results in a $(2g)$-by-$(2g)$
matrix $M$ for the action of $p$-Frobenius to finite $p$-adic approximation.
\item[Step 3] By binary-powering or similar, compute the product $N=MM^\sigma\ldots
M^{\sigma^{n-1}}$ and the characteristic polynomial $F_p(X)$ of $N$.
This is actually equal to $L(X) \in \Z[X]$ but will have been determined
here in $\Z_p[X]$ to a large, finite $p$-adic precision.
\item[Step 4] Recover and return $L(X)$ from the $p$-adic approximation in step 3,
using the Weil bound to guarantee correct integer coefficients.
\end{description}

\end{algorithm}

\bigskip

\noindent\underline{Extension of the original algorithm.}
\medskip

We adapt/extend the original algorithm in two ways.
\begin{itemize}
\item Even degree models ($d$ even) are allowed.
\item When $p < 2g$, $d$ odd, or $p \le g$, $d$ even, the differential pseudo-basis
$x^i(dx/y^3)$, $0 \le i \le d-2$ is used rather than $x^i(dx/y)$.
\end{itemize}

The first change extends the algorithm to an arbitrary hyperelliptic
curve (possibly after applying a quadratic twist as described earlier).

The second change guarantees that we always work with a $p$-Frobenius matrix $M$ with $p$-integral
coefficients. In the cases where we use the alternative differential basis, Kedlaya's original 
basis generally leads to a $p$-adically non-integral $M$. Strictly speaking, the alternative set
of differential forms only form a basis for cohomology when $d$ is odd. This is why we refer to
it as a pseudo-basis. It still leads to correct results in the new algorithm.
All of this, along with the justification for 
the new Step 4 below in the even degree case, is demonstrated in Sections 3 and 4.
\bigskip

The new algorithm is formally very similar to the original, so we will just state the changes
that need to be made.
\bigskip

\noindent{\bf Steps 1 and 2}. These are unaffected except that the
expression to be formally expanded in step 1 has exponent $-3/2$
rather than $-1/2$ in the cases where the alternative differential pseudo-basis is used.
The matrix $M$ in step 2 will be of size $2g+1$ rather than $2g$ when $d$ is even.
\medskip

\noindent{\bf Step 4} Compute $L_1(X)$ from the $p$-adic approximation to $F_p(X)$ coming
from Step 3. If $d$
is even then let $L(X)=L_1(X)/(X-q)$ if using the $dx/y$ basis or $L(X)=L_1(X)/(X-1)$
if using the $dx/y^3$ pseudo-basis. If $d$ is odd, just let $L(X) = L_1(X)$.
Return $L(X)$.
\bigskip

The linear factor that has to be removed in the even case comes from an extra
eigenvalue of the action of Frobenius on cohomology (see Section~\ref{sec_zeta}).
That the factor is different for the pseudo-basis comes from the relation between it and
an actual cohomology basis. The extra $q$ eigenvalue is lost but a new eigenvalue $1$
appears (see Section 4).
\medskip

In practice, only half of the coefficients of $F_p(X)$ (those of the higher powers of $X$)
need to be computed (because of the $\alpha \leftrightarrow q/\alpha$ symmetry of the 
algebraic roots of $L(X)$) and we can effectively remove the extra $X-q$ or $X-1$ factor
from $F_p(X)$ (rather than from $L_1(X)$ at the end) in the even degree case during these 
computations. The coefficients can be computed from the traces of the first $g$ powers
of $N$ as a matrix over $\Z_p$. Removing the extra factors at this stage means that there
is no necessity to increase the $p$-adic precision to which we need to know $N$ beyond the
same lower bound used in the odd-degree case.  This is determined from the upper bound for the
size of the (top) coefficients of $L(X)$ that comes from all of its roots (over $\C$)
having absolute value $\sqrt{q}$. Expressions in $g$ and $n$ for the $p$-adic precision needed
in the initial series expansion computed in Step 1 are given near the end of Section 3.
\medskip

In the remainder of this section - which relates to Steps 1 and 2 - where we describe the
Monsky-Washnitzer cohomology groups and the reduction procedures for Step 2, no distinction need
be made between the even and odd degree cases except where indicated.

That the differential reductions of Step 2 take $p$-Frobenius transforms of elements of the
pseudo-basis back into linear combinations of such elements will be demonstrated in
Lemma~\ref{p_div2}.
\bigskip

\noindent\underline{Monsky-Washnitzer Cohomology.} \cite{MW68}, \cite{Mon68},
\cite{Mon71}.
\medskip

Let $X$ be a non-singular affine scheme over $k$. Monsky
and Washnitzer defined a $p$-adic cohomology theory for such $X$ with appropriate
fixed-point theorems for proving zeta-function results. Kedlaya used this
(originally at least) to provide the technical basis for his algorithm.
Monsky-Washnitzer cohomology agrees with Berthelot's more
general rigid cohomology in the affine case and is pleasantly explicit in its definition.
We will need some of its properties for later proofs and
so we give a brief description of the theory here.
\bigskip

\noindent Let $A_k$ temporarily represent the affine coordinate ring of our general $X$
and $A_R/R$ be a lift to an $R$-smooth $R$-algebra as above and $A_K = A_R\otimes_R K$.
\medskip

\begin{definition}
Let $F_q$ be the $k$-linear Frobenius endomorphism $A_k \rightarrow A_k$ given
by $a \mapsto a^q$.
Similarly, let $F_p$ be the $k$-semilinear endomorphism of $A_k$, $a \mapsto a^p$.
\end{definition}
\medskip

\noindent The goal is to define a good cohomology group which simulates de Rham cohomology
of $A_K$ and to which $F_q$ lifts. $F_q$ lifts to the $p$-adic completion,
$\hat{A}_R = \lim_{\stackrel{\leftarrow}{n}} A_R/p^nA_R$, but the
de Rham cohomology of $\hat{A}_K = \hat{A}_R\otimes_R K$ is usually bigger than
that of $A_K$. Monsky-Washnitzer define a subalgebra $A^{\dagger}$ of $\hat{A}_R$, referred
to as the weak completion, as follows. If $x_1,\ldots,x_r$ are $R$-algebra
generators of $A_R$ then 
$$ A^{\dagger} := \{ \sum^{\infty}_{n=0} a_n p_n(x_1,\ldots,x_r) : a_n \in p^nR,
   \ p_n \mbox{ of total degree } \le C(n+1) \mbox{ for some } C > 1 \} $$
and $A^{\dagger}_K = A^{\dagger}\otimes_R K$. Up to isomorphism, $A^\dagger$ is shown
to be independant of the lift $A_R$ and the generators $x_i$.

$\tilde{\Omega}^i_{\Ad_K/K}$ is the separated $i$th differential module, the
plain differential module $\Omega^i_{\Ad}$ of $\Ad$ divided out by the intersection 
$\cap_n p^n\Omega^i_{\Ad}$ and tensored with $K$. There is the usual differential
complex
$$ 0 \rightarrow \Ad_K \stackrel{d}{\rightarrow} \tilde{\Omega}^1_{\Ad_K/K}
 \stackrel{d}{\rightarrow} \tilde{\Omega}^2_{\Ad_K/K} \stackrel{d}{\rightarrow}
  \ldots $$
the homology groups of which give the MW cohomology groups $H^i(A_k;K)$.

If $A_k$ is of Krull dimension 1, as in our case, then $\tilde{\Omega}^i_{\Ad_K/K} = 0$ 
for all $i \ge 2$ and so $H^1(A_k;K) = \tilde{\Omega}^1_{\Ad_K/K}/d(\Ad_K)$ and all
higher cohomology is trivial.

If $F_q$ lifts to $F$ on $\Ad$ then $F$ functorially induces a $K$-linear automorphism
$F_*$ of the $H^i$, which is independent of the choice of lift, and there is a
cohomological trace formula for $\#X(\F_{q^m})$ for all $m \ge 1$ (see next section).
Furthermore, if $F_p$ lifts to a $\sigma$-semilinear map $\Fpd : \Ad \rightarrow \Ad$,
then $\Fpd$ induces a $\sigma$-semilinear automorphism $F_{p*}$ of the $H^i$ with
$F_* = F_{p*}^n$.
\bigskip

\noindent Now let $A_k$, $A_R$ refer to the hyperelliptic algebras again. The inversion
of $y$ allows Kedlaya to define a lift of $F_p$ to $\Ad$ by
$$ x \mapsto x^p\qquad y \mapsto y^p\left(1+{{Q^\sigma(x^p)-[Q(x)]^p}\over{y^{2p}}}
 \right)^{1/2}\qquad y^{-1} \mapsto y^{-p}(1+\ldots)^{-1/2} $$
The congruence $Q^\sigma(x^p) \equiv Q(x)^p$ mod $pR[x]$ means that the standard power series
expansions of $(1+\ldots)^{1/2}$ and $(1+\ldots)^{-1/2}$ converge to elements in $\Ad_K$.

In fact, Kedlaya gives the following explicit description of $\Ad$:
$$ \Ad = \left\{ \sum^{\infty}_{-\infty} S_n(x)y^n : \mbox{deg}(S_n) \le d-1 \quad
   \liminf_{n \rightarrow \infty} {{v_p(S_n)}\over{n}} > 0 \quad 
   \liminf_{n \rightarrow \infty}{{v_p(S_{-n})}\over{n}} > 0 \right\} $$
where $v_p(f)$, $f \in R[x]$ is the smallest $m$ such that $f \in p^mR[x]$.

The hyperelliptic involution $\omega : x \mapsto x, y^{\pm 1} \mapsto -y^{\pm 1}$
extends to $\Ad$ (and $\AdK$) giving the direct sum decomposition
$$ \Ad = A^\dagger_+ \oplus A^\dagger_- \quad\mbox{with}\quad
A^\dagger_+ = \{\sum S_{2n}y^{2n}\},\mbox{ }
A^\dagger_- = \{\sum S_{2n+1}y^{2n+1}\} $$
and a corresponding decomposition of $H^1(A_k;K)$ into $+$ and $-$ components.
Kedlaya shows that the Monsky-Washnitzer trace formula leads to the result that
the numerator of the zeta-function of $C$ is just the characteristic polynomial
of $F_*$ on $H^1_-$ when $d$ is odd. We will show in Section~\ref{sec_zeta}
that the same analysis gives only a minor difference when $d$ is even.
\bigskip

\noindent\underline{Reduction steps in the computation of $F_*$}
\medskip

Kedlaya shows that a $K$-basis for the finite-dimensional $H^1(A_k;K)$ is given by the
$A_K$ differentials
$$ \{ x^idx/y : 0 \le i \le d-2 \} \cup \{ x^idx/y^2 : 0 \le i \le d-1 \} $$
the first set giving a basis for $H^1_-$ and the second for $H^1_+$. We come back
to this in the next section where we note that it also holds for $d$ even.
\medskip

The first stage of the algorithm consists of expanding the series for $F_{p*}(1/y)$
to sufficient $p$-adic precision. We will give a precise value for
the precision required at the end of Section 3. 

The second stage consists of applying two types of reduction to reexpress these images
as $K$-linear combinations of basis elements. The two basic relations are
$$ y^2 = Q(x) \qquad \mbox{and}\qquad dy = (Q'(x)/2y) dx $$
where the prime denotes the standard derivative.
\medskip

As $Q$ and $Q'$ are relatively prime in $k[x]$, there exist $U,V \in R[x]$ such that
$UQ+VQ'$ is $1$. Therefore, for
any $S \in R[x]$, there exist $A,B \in R[x]$ with $S=AQ+BQ'$. Then, for
$m \not= 2$,
$$ S{{dx}\over{y^m}} = A{{dx}\over{y^{m-2}}}+2B{{dy}\over{y^{m-1}}} =
   A{{dx}\over{y^{m-2}}}+\left({2\over{m-2}}\right)B'{{dx}\over{y^{m-2}}}-
   \left({1\over{m-2}}\right)d\left({{2B}\over{y^{m-2}}}\right) $$ 
This gives the first reduction type:
\begin{multline*}\fbox{\tt RednA}\qquad\qquad S{{dx}\over{y^m}} \equiv 
\left(A+\left({2\over{m-2}}\right)B'
  \right){{dx}\over{y^{m-2}}}\quad \mbox{if } S=AQ+BQ' \\ \end{multline*}
to reduce $m$ by 2 when $m > 2$. Note that in practice, we only apply
this for $\deg(S) < d$ because we begin by recursively dividing $S$ by
$Q$ (which is monic) to express $S$ as $S_0+S_1Q+S_2Q^2+\ldots$
with $S_i \in R[x]$, $\deg(S_i) < d$ and then replace $Q^i$ by $y^{2i}$. In fact,
we only divide out by $Q$ and replace by $y^2$ while this leads to negative powers
of $y$ in the expression. Note also that if $\deg(S) < d$ (in fact, if
$\deg(S) < 2d-1$), then $A$ and $B$ can be chosen as $SU$ mod $Q'$ and $SV$ mod $Q$,
so with $\deg(A) < d-1$ and $\deg(B) < d$.
 
In this way, {\tt RednA} applied recursively reduces $S(dx/y^m)$ to a $T(dx/y)$
or $T(dx/y^2)$ depending on the parity of $m$. Note also, that if the initial
$m$ was $\le 0$, then we could shift up instead by replacing a positive power
$y^{2i}$ by $Q^i$, but this case doesn't occur.
\medskip

\noindent The second reduction uses the differential equalities (for $r \ge 0$)
$$ d(x^r) = rx^{r-1}dx = rx^{r-1}Q(x)(dx/y^2)\quad \mbox{leading term
$rx^{r+d-1}$}$$
$$ d(2x^ry) = [2rx^{r-1}Q(x)+x^rQ'(x)](dx/y)\quad \mbox{leading term
$(2r+d)x^{r+d-1}$}$$
Subtracting multiples of the right hand sides of these from $T(dx/y^2)$ or
$T(dx/y)$, reduces the degree of $T$ until we are back to linear combinations
of basis elements. This will be referred to as {\tt RednB}.
\smallskip

Applying a number of {\tt RednA} followed by a number of {\tt RednB} steps
thus reduces any $S(dx/y^m)$ to a linear combination of basis elements. The
reductions can clearly introduce a power of $p$ into the denominator of the
final expression. This should be accurately estimated in order to gauge
{\it a priori} what the loss of $p$-adic precision may be and if there will
be non-integral terms at the end. We give the analysis in Section 3.
\bigskip

Stages 1 and 2 of the algorithm give an explicit $(d-1)$-by-$(d-1)$ matrix
$M$ over $R$ which represents the $\sigma$-linear transformation $F_{p*}$
on $H^1_-$ with respect to the chosen $x^i(dx/y)$ basis. Computationally,
the entries of $M$ will be finite approximations of the exact values which
are correct mod $p^N$ for some $N$ depending on the $p$-adic precision that
we carried out the stage~1 expansion to and on the loss of precision in stage 2.
The final stage is to compute the $n$th twisted power of $M$: $M^{\sigma^{n-1}}
M^{\sigma^{n-1}}\ldots M$. This gives the matrix of $F_*$ on $H^1_-$ and we
just need its characteristic polynomial, $P_F(t)$.

If $M$ is $p$-integral, $P_F(t)$ will be correct mod $p^N$ and the Weil bound
tells us how large $N$ should be taken for this to determine the numerator
of the zeta function of $C$. If $M$ is non-integral, it is hard to give good
{\it small} estimates of the $p$-adic precision lost in the
twisted powering. Therefore, it is highly desirable to have a $p$-integral
$M$. As we show in Section 3, for small $p$, the $x^i(dx/y)$ basis
will usually lead to $M$ with denominators whereas the $x^i(dx/y^3)$ pseudo-basis
never does.
\bigskip

\section{Adaptation of the basic algorithm}

In this section we describe in detail the adaptations to Kedlaya's algorithm outlined
in the introduction and previous section, and provide proofs of correctness.
\medskip

\subsection{Zeta function formula: even or odd case}
\label{sec_zeta}  
$\quad$
\medskip

\noindent Let $P_C(t)$ be the numerator of the zeta-function of $C/k$
(see, eg, App. C, \cite{H77}). The polynomial
$P_C(t) = t^{2g}+c_{2g-1}t^{2g-1}+\ldots+c_0$,
a monic polynomial over $\Z$. Its roots over $\C$, $\{\alpha_i\}$, all have
absolute value $q^{1/2}$ and this set is invariant under $\alpha \mapsto q/\alpha$.
Furthermore, if $S_r(\alpha) = \alpha_1^r+\ldots+\alpha_{2g}^r$
then
$$ \#C(\F_{q^r}) = q^r + 1 -S_r(\alpha) \qquad \forall r \ge 1 $$

\begin{lemma}
\label{lem_zeta}
The characteristic polynomial of $F_*$ acting $K$-linearly on $H^1(A_k;K)_-$ is
$P_C(t)$ when $d$ is odd, and is $(t-q)P_C(t)$ when $d$ is even.
\end{lemma}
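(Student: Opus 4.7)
My plan is to follow Kedlaya's approach via the Monsky--Washnitzer trace formula, exploiting the decomposition $H^1(A_k;K) = H^1_+ \oplus H^1_-$ coming from the hyperelliptic involution and computing the $+$ component separately by comparison with a simpler affine curve.

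The first step is to invoke the MW trace formula applied to $C^a$, which (using that $H^0(A_k;K) = K$ with $F_*$ acting as the identity) reads
$$\#C^a(\F_{q^r}) \;=\; q^r \;-\; q^r\,\operatorname{Tr}\bigl(F_*^{-r} \mid H^1_+\bigr) \;-\; q^r\,\operatorname{Tr}\bigl(F_*^{-r} \mid H^1_-\bigr)$$
for every $r \ge 1$. The next step is to identify $A^\dagger_+$ with the weak completion of $R[x, 1/Q(x)]$ via the relation $y^{2n} = Q(x)^n$. Under this identification, Kedlaya's Frobenius lift is even in $y$, so it preserves the $+$ part and restricts to a $\sigma$-semilinear lift of Frobenius on $U := \mathbb{A}^1_k \setminus V(Q)$ (sending $Q(x)$ to $Q^\sigma(x^p)$). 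By functoriality of MW cohomology and its independence of the lift, $H^1_+$ is exactly $H^1(U;K)$ as a module with Frobenius action. Applying the trace formula a second time --- now to $U$, whose point count is visibly $q^r - W_r$ with $W_r := \#\{\alpha \in \F_{q^r} : Q(\alpha) = 0\}$ --- gives
$$q^r\,\operatorname{Tr}(F_*^{-r} \mid H^1_+) \;=\; W_r,$$
sidestepping any direct diagonalisation of the Frobenius action on $H^1_+$.

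The third step is a simple set-theoretic count. Since $C^a$ is $C$ with all Weierstrass points and points at infinity removed, $\#C^a(\F_{q^r}) = \#C(\F_{q^r}) - W_r - e$, where $e = 1$ in the odd case and $e = 2$ in the even case (both infinity points being $k$-rational once $a_d = 1$). Combining this with $\#C(\F_{q^r}) = q^r + 1 - S_r(\alpha)$ and the previous displayed equation yields, for all $r \ge 1$,
$$q^r\,\operatorname{Tr}(F_*^{-r} \mid H^1_-) \;=\; (e-1) + S_r(\alpha).$$

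Finally, I would use the functional-equation symmetry $\{\alpha_i\} = \{q/\alpha_i\}$ of the roots of $P_C$ to rewrite $S_r(\alpha) = \sum_i (q/\alpha_i)^r$, and observe that the constant contribution $e-1$ is either $0$ or $1 = (q/q)^r$. Newton's identities then force the $F_*$-eigenvalue multiset on $H^1_-$ to be $\{\alpha_1, \ldots, \alpha_{2g}\}$ when $d$ is odd, giving characteristic polynomial $P_C(t)$, and $\{q, \alpha_1, \ldots, \alpha_{2g}\}$ when $d$ is even, giving $(t-q)P_C(t)$. The main subtle point requiring care is the identification of $H^1_+$ with $H^1(U;K)$ as a Frobenius module: one must check both the algebraic identification of weak completions under $y^2 = Q$ and the compatibility of the two Frobenius lifts. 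Once that identification is granted, the remainder is a clean manipulation of two instances of the same trace formula.
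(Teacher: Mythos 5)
Your proposal is correct and follows essentially the same route as the paper: both reduce to the MW trace formula for $C^a$ together with the identification of the $+$-part of cohomology with that of $\Prj^1_k$ minus the zeros of $Q$ and infinity (your $U$, the paper's $\Prj^a$), then isolate the trace on $H^1_-$ and read off eigenvalues via Newton's identities and the $\alpha \leftrightarrow q/\alpha$ symmetry. The only difference is cosmetic: you evaluate the $H^0$ contribution explicitly as $q^r$ and apply the trace formula to $U$ directly, whereas the paper keeps the $H^0$ and $H^1_+$ traces symbolic in both equations and cancels them by subtraction.
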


\begin{proof}
The following argument is from \cite{Ked01} when $d$ is odd.
From the explicit description
of $\Ad$, it follows immediately that, if $B_k = k[x]_Q$ and $B_R = R[x]_Q$,
then $F_p$ lifts to $B^\dagger$ as a $\sigma$-linear map with $x \mapsto x^p$ and
$$ A^\dagger_+ \simeq B^\dagger \qquad\mbox{and}\qquad (\tilde{\Omega}^1_{\Ad/R})^+
\simeq \tilde{\Omega}^1_{B^\dagger/R}$$
as $F_p$-modules. Thus (abbreviating $H^i(A_k;K)$ to $H^i$ and using subscripts for the
$\pm$ eigenspaces), $H^0 = H^0_+$ and
$H^1_+$ are $F_*$-isomorphic to the cohomology groups for $Spec(B_k)$. This is
isomorphic to $\Prj^{a} := \Prj_k^1\backslash S$, where $S$ is the set of finite 
places corresponding to the irreducible factors of $Q \in k[x]$ and the point at 
infinity.

Essentially, the
contribution to cohomology resulting from the removal of closed points from $C$ to
get to $C^a$ is precisely matched by the removal of the corresponding points
from $\Prj^1$ in the odd case and gives the $H^0_+$ component. When $d$ is even,
as well as the Weierstrass points, we are removing 2 $k$-rational points from $C$ at
infinity which are swapped by the
hyperelliptic involution and lie over a single $k$-rational point of $\Prj^1$. This leads to
an extra eigenvalue $q$ in each of the $+$ and $-$ components of $H^1$. Formally, this
follows easily from the trace formula as we now show.
\medskip
 
The fixed-point theorem for Monsky-Washnitzer cohomology gives the following trace
formula for a general affine $X/k$ of dimension $n$ with (finite-dimensional) 
cohomology groups $H^i$:
$$ \#X(\F_{q^r}) = \sum_{i=0}^n (-1)^i {\rm Trace}_K((q^nF_*^{-1})^r|H^i)\qquad
 \forall r \ge 1 $$
Let $N_r =$ the number of roots of $Q(x)$ over $\F_{q^r}$ and $\delta = 0$ if
$d$ is odd and $1$ if $d$ is even. The MW trace formula for $C^{a}$ and $\Prj^{a}$
and Weil formula for $\#C(\F_{q^r})$ give
\begin{eqnarray*}
(C^{a})\qquad q^r - S_r(\alpha) -N_r -\delta &=& {\rm Tr}((qF_*^{-1})^r|H^0)- 
  {\rm Tr}((qF_*^{-1})^r|H^1_+)-{\rm Tr}((qF_*^{-1})^r|H^1_-)\\
(\Prj^{a})\qquad\qquad\qquad\quad q^r - N_r &=&
{\rm Tr}((qF_*^{-1})^r|H^0)-{\rm Tr}((qF_*^{-1})^r|H^1_+)
\end{eqnarray*}
Subtracting gives
$$ {\rm Tr}((qF_*^{-1})^r|H^1_-) = S_r(\alpha) + \delta\qquad\forall r \ge 1$$
which implies that the eigenvalues of $qF_*^{-1}$ on $H^1_-$ are 
$\{\alpha_i\} [\cup \{1\}]_{d\, even}$. Hence, the eigenvalues of $F_*$ are
 $\{\alpha_i\} [\cup \{q\}]_{d\, even}$.

Therefore the characteristic polynomial of $F_*$ on $H^1_-$ is $P_C(t)$,
if $d$ is odd, or $(t-q)P_C(t)$, if $d$ is even.
\end{proof}
\medskip

\subsection{Differential basis choices}
$\quad$
\medskip

\noindent We first note that Kedlaya's assertion that $ \{ x^idx/y : 0 \le i \le d-2 \} 
\cup \{ x^idx/y^2 : 0 \le i \le d-1 \} $ form a basis for $H^1$ remains true for
$d$ even.

By Thm. 5.6 of \cite{MW68}, the natural map $H^1_{dR}(C_K^{a}/K) \rightarrow
H^1(A_k;K)$ is an isomorphism, where $C_K$,$C_K^{a}$ are the hyperelliptic lifts of
$C$, $C^{a}$ to $K$ corresponding to the lift of $Q(x)$. The reductions
{\tt RednA} and {\tt RednB} on algebraic differentials show that the above set of
differentials generate $H^1_{dR}(C_K^{a}/K)$ and a similar argument shows that
no nontrivial $K$-linear sum of them is of the form $df$ for $f \in K[x,y,y^{-1}]
/(y^2-Q(x))$ [Note: any element of this algebra is a finite sum of the form
$\sum_{n=0}^N a_n(x)y^{-n}$]
\smallskip

\noindent{\it Remark.} That the given differentials form a basis also follows easily from
general de Rham theory for 
complete curves and their open affine subsets applied to $H^1_{dR}(C_K/K)$ and
$H^1_{dR}(C^{a}_K/K)$.
\medskip
 
\begin{definition}
We define two sets of differentials, $B_1$ and $B_2$.
  $$ B_1 = \{ dx/y, x(dx/y), \ldots, x^{d-2}(dx/y) \} $$
  $$ B_2 = \{ dx/y^3, x(dx/y^3), \ldots, x^{d-2}(dx/y^3) \} $$
The classes of the differentials in $B_1$ give a basis for $H^1_-$. $B_2$ is our
pseudo-basis whose classes only give a basis for $H^1_-$ when $d$ is odd, as we shall see.

For convenience, we also define $V_2$ as the $(d-1)$-dimensional $K$-vector subspace of
$\tilde{\Omega}^1_{\Ad_K/K}$ with basis $B_2$ and $\eta$ as the class map into $H^1_-$
  $$ \eta : V_2 \longrightarrow H^1_- \quad x^{i-1}(dx/y^3) \mapsto [x^{i-1}(dx/y^3)] $$

\end{definition}

\begin{lemma}
\label{p_div1}
\
\begin{itemize}
\item[(i)] (Kedlaya) Let $m > 2$, $S \in R[x]$ with $\mbox{deg}(S) \le d-1$.
Under {\tt RednA}, let
$$ S(dx/y^m) \equiv T(x)\{(dx/y)\mbox{ $m$ odd }, (dx/y^2)\mbox{ $m$ even}\}
\quad T(x) \in K[x], \mbox{ deg}(T) < d $$
then $p^{\left\lfloor \log_p(m-2) \right\rfloor}T \in R[x]$.
\smallskip
\item[(ii)] Let $S \in R[x]$ with $\mbox{deg}(S)=m \ge d-1$. Under {\tt RednB} let
$$ S(dx/y) \equiv T(x)(dx/y) \qquad  T(x) \in K[x], \mbox{ deg}(T) < d-1 $$
then $p^{\left\lfloor \log_p(2m-d+2) \right\rfloor}T \in R[x]$. If $d$ is even,
$p^{\left\lfloor \log_p(m-(d/2)+1) \right\rfloor}T \in R[x]$.
\end{itemize}
\smallskip
In either case, the $d(\sum_a^b S_r(x)y^r)$ differential giving the reduction
can be chosen with $p^uS_r(x)\in R[x]\;\forall r$ for the same $p^u$.
\end{lemma}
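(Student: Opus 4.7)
For part (i), I would induct on the number of iterated \texttt{RednA} steps. The starting point is that $Q\in R[x]$ is separable over $k$, so $Q$ and $Q'$ are coprime mod $p$; hence $\mathrm{Res}(Q,Q')$ is a unit in $R$. This lets me fix Bezout polynomials $U,V\in R[x]$ with $UQ+VQ'=1$, $\deg U<d-1$, $\deg V<d$, and then for any $S\in R[x]$ of degree $<d$ take $A=SU\bmod Q'$ and $B=SV\bmod Q$, both in $R[x]$, in the decomposition $S=AQ+BQ'$. A single step of \texttt{RednA} then introduces only the explicit factor $\tfrac{2}{m-2}$, whose $p$-adic denominator is $v_p(m-2)\le\lfloor\log_p(m-2)\rfloor$. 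Iterating from $y^m$ down to $y^1$ or $y^2$, the task is to show that the cumulative $p$-adic denominator of the resulting $T$ is still bounded by $p^{\lfloor\log_p(m-2)\rfloor}$---the content being that numerators produced at later steps absorb powers of $p$ that would otherwise appear in the denominator.

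For part (ii) the approach is analogous, now using the \texttt{RednB} relations
\[d(x^r) = rx^{r-1}Q(x)(dx/y^2), \qquad d(2x^ry)=[2rx^{r-1}Q(x)+x^rQ'(x)](dx/y)\]
whose leading terms are $rx^{r+d-1}$ and $(2r+d)x^{r+d-1}$ respectively. Reducing $T(dx/y)$ of degree $m$ down to degree $<d-1$ by successive cancellation of the top coefficient uses $d(2x^ry)$ for $r=m-d+1,m-d,\ldots,1$, each step introducing a denominator $1/(2r+d)$; the largest single factor is $1/(2m-d+2)$, giving the claimed bound. The sharper bound $\lfloor\log_p(m-(d/2)+1)\rfloor$ in the $d$-even case comes from writing $2r+d=2(r+d/2)$: since $p$ is odd, the factor $2$ is a unit in $R$, so only $r+d/2$ contributes to the $p$-adic denominator, and its largest value is $m-d/2+1$.

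For the final ``same $p^u$'' assertion, I would track the exact differential subtracted at each reduction step---$-\tfrac{1}{m-2}d(2B/y^{m-2})$ in \texttt{RednA}, or a scalar multiple of $d(2x^ry)$ in \texttt{RednB}. Each such differential is built linearly from the current polynomial (via multiplication by $U$, $V$ or $x^r$) together with the rational factor $1/(m-2)$ or $1/(2r+d)$, so it carries the same $p$-adic denominator as the current polynomial. Since the current polynomials themselves are bounded in denominator by the same $p^u$ that controls $T$, summing all these subtracted differentials across steps produces a single $d(\sum_a^b S_r(x)y^r)$ with $p^u S_r\in R[x]$ for the same $u$.

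The principal obstacle is the logarithmic bound in parts (i) and (ii). Naively, denominators accumulate multiplicatively across iterations, giving a cumulative $p$-adic denominator of valuation $\sum_k v_p(m-2k)$ (resp.\ $\sum_k v_p(2(m-k)-d+2)$), which can be substantially larger than the stated $\lfloor\log_p(m-2)\rfloor$. Establishing the stated bound therefore requires either an explicit cancellation argument between the numerators produced at each step and the denominators introduced at earlier steps, or a closed-form formula for the iterated reduction whose denominator is manifestly logarithmic in $m$.
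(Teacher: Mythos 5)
Your proposal correctly sets up the reduction steps and, to your credit, you explicitly flag the central difficulty: iterating \texttt{RednA} or \texttt{RednB} step by step appears to \emph{multiply} denominators across steps, giving a bound of valuation $\sum_k v_p(\cdot)$ rather than the much smaller $\lfloor\log_p(\cdot)\rfloor$ claimed. But you then stop there. The statement you offer for (ii) --- ``the largest single factor is $1/(2m-d+2)$, giving the claimed bound'' --- is exactly the leap that needs justification, and your closing paragraph acknowledges you do not supply it. So the proposal has a genuine gap: you have reduced the lemma to the right question but not answered it.

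The idea the paper (following Kedlaya's Lemmas~2 and~3, with the errata corrections) uses to close this gap is \emph{not} a step-by-step tracking of the reduction, and not an algebraic cancellation identity either. It is a local-expansion argument. One writes the reduction globally as $S\,(dx/y^m) - T\,(dx/y^{m'}) = d\bigl(\sum_a^b S_r(x)\,y^r\bigr)$, and then expands both sides formally with respect to a local uniformizer at a suitable point: a Weierstrass point (where $y$ is the uniformizer) for \texttt{RednA}, and a point at infinity for \texttt{RednB}. Because the left side consists of known $p$-integral data plus the single unknown $T$, and because formal termwise \emph{integration} of a power series $\sum c_j t^j\,dt$ produces coefficients $c_j/(j+1)$, the $p$-adic denominators of the $S_r$ (hence of $T$) are controlled by the worst \emph{single} integer $j+1$ that appears, not by their product. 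That is where $\lfloor\log_p(m-2)\rfloor$ (resp.\ $\lfloor\log_p(2m-d+2)\rfloor$, and $\lfloor\log_p(m-d/2+1)\rfloor$ for $d$ even since $p$ is odd) comes from. Your proposal would need to be supplemented by this local-expansion step to become a proof; as written, it stalls at exactly the point where the real work begins.
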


\begin{proof}
i) is just Lemma 2 of \cite{Ked01}. Note that in the statement of that Lemma,
$\log_p(2m+1)$ should be replaced by $\log_p(2m-1)$ (with $ m \ge 1$) and in the
proof, every $\pm m$ as the upper or lower limit of a sum should be replaced by
$\pm (m-1)$. The proof of the lemma works just as well for $d$ even or odd and
the final statement about $d(\sum_a^b S_r(x)y^r)$ above is what is actually
proven in Lemma 2.
\smallskip

ii) This is essentially Lemma 3 of \cite{Ked01} (or rather the corrected statement
in the errata, \cite{Ked03}). As Kedlaya notes, 
ii) and the statement about $d(\sum_a^b S_r(x)y^r)$ follow in the same way as part i)
(but more easily). We have that $ S(dx/y)-d(\sum_{r=0}^{m+d-1}2a_rx^ry) = T(dx/y)$, 
$d(2x^ry) = ((d+2r)x^{d+r-1}+\ldots)(dx/y)$  and the coefficient of $x^s$ in $T$ is
zero for $s \ge d-1$. Kedlaya's argument - considering formal expansions of the
differentials with respect to a local parameter at one of the points at infinity -
effectively shows that the largest power of $p$ that may occur in denominators
is the largest power of $p$ that can divide {\it one} of the $d+2r$
(rather than their product). When $d$ is even, it is only necessary to consider
divisibility of $(d/2)+r$ since $p$ is odd. 
\end{proof}
\bigskip

\noindent Any element of $\tilde{\Omega}^1_{\Ad_K/K}$ can be written uniquely in the
form $\sum^{+\infty}_{-\infty} S_n(x)y^n dx$ with $\mbox{deg}(S_n) < d$, which we
refer to as its {\it standard expansion}.
\bigskip

\begin{lemma}
\label{p_div2}
\
\begin{itemize}
\item[(i)] For all $\omega \in B_2$, the standard expansion of $F_{p*}\omega$ is of
the form $\sum_{n \geq 3} B_n(x) (dx/y^n)$.
\smallskip
\item[(ii)] {\tt RednA} on the $\sum_{n \geq 1} S_n(x) (dx/y^n)$ 
      part of the standard expansion of $F_{p*}(x^i(dx/y))$ (resp. $F_{p*}(x^i(dx/y^3))$)
       gives a form which is a linear combination of elements
      of $B_1$ (resp. $B_2$) with $p$-integral coefficients.
\smallskip
\item[(iii)] Consider the coefficients of the $B_1$ expansion resulting
      from {\tt RednB} on the $\sum_{n \geq 1} S_n(x)y^n dx$ 
      part of the standard expansion of $F_{p*}(x^{i-1}(dx/y))$.
  \begin{itemize}
  \item[(a)] If $d = 2g+1$, then these coefficients are $p$-integral
   for $i \le g$ and for $i = g+r$ have denominator bounded by
   $p^{-\left \lfloor \log_p(2r-1) \right \rfloor}$.
  \item[(b)] If $d = 2g+2$, then these coefficients are $p$-integral
   for $i \le g+1$ and for $i = g+r+1$ have denominator bounded by
   $p^{-\left \lfloor \log_p(r) \right \rfloor}$.
   \end{itemize}
\end{itemize}
\smallskip

\noindent By part (i), we can use {\tt RednA} to reduce
    $F_{p*}\omega$ back to linear combinations of elements in $B_2$
    rather than descending to $B_1$. This is what is meant in
    part (ii). In this way, we get a
    $\sigma_p$-linear map (also denoted $F_{p*}$) $V_2 \rightarrow V_2$.

\end{lemma}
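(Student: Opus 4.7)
\noindent The plan is to handle all three parts by first writing the Frobenius image explicitly. For any $\omega = x^j\,dx/y^m$ with $m\in\{1,3\}$,
\[
F_{p*}\omega \;=\; p\sum_{k\ge 0}\binom{-m/2}{k}\, p^k\, x^{p(j+1)-1}\,\widetilde R(x)^k\, y^{-mp-2pk}\, dx,
\]
where $\widetilde R(x) := (Q^\sigma(x^p)-Q(x)^p)/p \in R[x]$. Since $p$ is odd, $p^k\binom{-m/2}{k}$ lies in $R$, so the $k$-th term carries a factor of at least $p^{k+1}$ in its numerator. Putting it into standard form by expanding $x^{p(j+1)-1}\widetilde R(x)^k = \sum_\ell V_{k,\ell}(x)\,Q(x)^\ell$ with $\deg V_{k,\ell} < d$, one obtains contributions $V_{k,\ell}\,y^{2\ell-mp-2pk}\,dx$ with $0\le\ell\le L_k := pk + I$, where $I = \lfloor(p(j+1)-1)/d\rfloor \le p-1$.

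\noindent Part~(i) is then immediate: for $m=3$ the largest $y$-exponent occurring is $2L_k - 3p - 2pk = 2I - 3p \le -p-2 \le -5$, and every such exponent is odd (since $3p$ is odd while $2\ell$ and $2pk$ are even). Hence the standard expansion of $F_{p*}\omega$ contains only $y^n\,dx$ with $n$ odd and $n\le -3$, i.e.\ only $dx/y^n$ with $n\ge 3$.

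\noindent For part~(ii), {\tt RednA} is applied termwise to each $V_{k,\ell}(dx/y^M)$ with $M := mp+2pk-2\ell$, reducing to $dx/y^\epsilon$ with $\epsilon=1$ for $m=1$ (basis $B_1$) or $\epsilon=3$ for $m=3$ (pseudo-basis $B_2$). Stopping at $\epsilon=3$ rather than $\epsilon=1$ only omits the final Kedlaya step $M'=3\to M'=1$, which divides by $M'-2 = 1$ and introduces no new denominator; so Lemma~\ref{p_div1}(i) still bounds the resulting denominator by $p^{\lfloor\log_p(M-2)\rfloor}\le p^{\lfloor\log_p(mp+2pk-2)\rfloor}$. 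Combining with the $p^{k+1}$ numerator factor, $p$-integrality reduces to $p^{k+1}\ge mp+2pk-2$, or equivalently $m+2k\le p^{k+1}$; for $p\ge 3$ and $m\in\{1,3\}$ this is implied by the elementary inequality $2k+3\le p^{k+1}$, which holds for all $k\ge 0$ by a one-step induction. Summing these $p$-integral contributions, the result of the reduction is an $R$-linear combination of $B_1$ or $B_2$ as required.

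\noindent Part~(iii) is the most delicate. The positive-$y$ part $\sum_{n\ge 1}S_n(x)y^n\,dx$ of the standard expansion consists of odd $n$ only (since $F_{p*}$ preserves $y$-parity), and using $y^n\,dx = Q(x)^{(n+1)/2}(dx/y)$ it rewrites as $T(x)(dx/y)$ for a single polynomial $T$, to which {\tt RednB} is applied. A direct index count shows a positive $y$-exponent occurs only when $I\ge(p+1)/2$, which fails for $i\le g$ in the odd case $d=2g+1$ and for $i\le g+1$ in the even case $d=2g+2$; hence in those ranges the positive part is empty and the coefficients are trivially $p$-integral. For $i=g+r$ (odd) or $i=g+r+1$ (even) one bounds $\deg T$ in terms of $r,g,p$ and applies Lemma~\ref{p_div1}(ii) --- using the sharper even-case bound $p^{\lfloor\log_p(\deg T - d/2 + 1)\rfloor}$ in the second subcase --- so that after absorbing the $p^{k+1}$ numerator factor from the Frobenius expansion one obtains exactly $p^{-\lfloor\log_p(2r-1)\rfloor}$ (odd) or $p^{-\lfloor\log_p(r)\rfloor}$ (even). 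The main obstacle is the numerical bookkeeping here: the degree estimate on $T$ must be sharp enough that after cancellation with one Frobenius $p$-factor the quantity $p(2r-1)+O(g)$ inside the logarithm reduces to exactly $2r-1$ (resp.\ $r$), and one must confirm that higher-$k$ contributions feeding into the same positive $y^n$ coefficients cannot introduce worse denominators, which follows from their additional $p^{k+1}$ factors dominating the corresponding denominator bounds by the same induction as in part~(ii).
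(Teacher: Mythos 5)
Your overall decomposition — expand $F_{p*}\omega$ as a $p$-power series in $y^{-2}$, rewrite each term in standard form via $y^2=Q(x)$, then reduce term by term with {\tt RednA}/{\tt RednB} and track $p$-valuations — is exactly the strategy of the paper's proof, and your parts (i) and (ii) do go through, so there is no conceptual divergence. One bookkeeping remark on part (ii): the displayed condition ``$p^{k+1}\ge mp+2pk-2$'' is not the correct translation of $p$-integrality, and it is not equivalent to ``$m+2k\le p^{k+1}$'' (the first forces $p^k\ge m+2k$, which already fails at $k=0$, $m=3$, $p=3$). The correct translation of $\lfloor\log_p(M-2)\rfloor\le k+1$ with $M=mp+2pk$ is $M-2<p^{k+2}$, which \emph{is} equivalent to $m+2k\le p^{k+1}$, so your final inequality is right but the chain leading to it is broken and should be fixed. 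This is also tighter than your phrase ``carries a factor of at least $p^{k+1}$'' by itself suggests: when you aggregate over $k$ the paper tracks the valuation of the coefficient of each fixed $y^{-m}$, but since the smallest contributing $k$ dominates, the termwise estimate you use recovers the same bound.

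The genuine gap is part (iii). You reduce it to a degree estimate on $T$ plus Lemma~\ref{p_div1}(ii), which is the right plan, but you then explicitly defer the content: ``the degree estimate on $T$ must be sharp enough that after cancellation with one Frobenius $p$-factor the quantity $p(2r-1)+O(g)$ inside the logarithm reduces to exactly $2r-1$ \ldots and one must confirm that higher-$k$ contributions \ldots cannot introduce worse denominators.'' This is precisely the part that has to be done, and the ``$+O(g)$'' is a symptom of not having done it: the reason the stated bounds are sharp is that the worst-case term in the $y^{-m}$ expansion is $m=p$, whose coefficient $b_p(x)$ is identically $1$ (degree $0$), so the logarithm argument is exactly $p(2i-d)$ in the odd case and $p(i-d/2)$ in the even case, with \emph{no} $g$-dependent slack to absorb. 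For $m\ge p+2$ one has $d_m\le d-1$ but the Frobenius valuation jumps by $\lceil(m-p)/(2p)\rceil$, and one must check (as the paper does, by maximizing the expression $\lfloor\log_p(2pi-md+2d_m)\rfloor-1-\lceil(m-p)/2p\rceil$ in $m$) that these terms are strictly worse than the $m=p$ term. Also note that the threshold ``$i\le g$ (resp.\ $i\le g+1$)'' for there being no positive-$y$ part is not the full story: you also need the $T(dx/y)$ arising from $n\ge 1$ to reduce with no denominators there, which again rests on the $m=p$, $d_p=0$ observation. Without these explicit computations, (iii)(a) and (iii)(b) are unproved; they are not routine but are where the actual content of the lemma lies.
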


\begin{proof}
We have, for $1 \le i \le d-1$, $k = 0$ or $1$,
\begin{eqnarray}
F_{p*}(x^{i-1}(dx/y^{2k+1}))
    & = & {\textstyle
      x^{p(i-1)}  y^{-(2k+1)p}
    \left ( 1 + p
    \left ({\frac{Q^{\sigma_p}(x^p)-(Q(x)^p)}{p}} \right )y^{-2p} 
           \right )^{-(2k+1)/2} d(x^p) } \nonumber\\
    & = & {\textstyle
        p x^{pi-1} y^{-(2k+1)p}
         \left ( 1 + p\{a_1(x)y^{-2} + \ldots + a_{p}(x)y^{-2p}\}
	 \right )^{-(2k+1)/2} dx }\nonumber \\
    & = & {\textstyle
        p x^{pi-1} y^{-(2k+1)p}
         \left ( 1 + \sum_{n=1}^\infty {-(2k+1)/2 \choose n}p^n
	 \{ \ldots \}^n
	 \right ) dx }\nonumber \\
    & = & {\textstyle
        p x^{pi-1}
         \left ( \sum_{m \, odd \, \ge (2k+1)p}
           p^{\left \lceil {\frac{m-p}{2p}} \right \rceil - k }
	 \ b_m(x) y^{-m}
	 \right ) dx }\nonumber
\end{eqnarray}
with $a_i(x),b_i(x) \in R[x]$ of degree less than $d$.
Note that $b_{(2k+1)p}(x) = 1$ and that  $\{\ldots\}^n$ when expanded is then reduced
to the form $A_1(x)y^{-2}+\ldots+A_{pn}(x)y^{-2pn}$ with $A_i(x) \in R[x]$ of degree
less than $d$.

When we multiply each term in the final sum by $x^{pi-1}$ and reduce
using the relation $y^2 = Q(x)$, we see that the
result is
$$ F_{p*}(x^{i-1}(dx/y^{2k+1})) = {\textstyle
     \sum_{m \, odd \, \ge m_0} c_m(x) y^{-m} dx} $$
where
\begin{equation}\label{eqn:m_size}
   m_0 \ge (2k+1)p - 2\lfloor (pi-1)/d \rfloor
\end{equation}
and each $c_m(x) \in pR[x]$. Here we have used $b_{(2k+1)p}(x) = 1$ to get $pi-1$ rather
than $pi+d-2$. Furthermore,
\begin{equation}\label{eqn:c_val} c_m(x) \in 
   p^{\left \lceil {\frac{m-p}{2p}} \right \rceil + 1 - k } R[x]\qquad
   \forall m \ge (2k+1)p
\end{equation}
\smallskip

(i) When $k=1$, by (\ref{eqn:m_size}) with $i = d-1$, $m_0 \ge p+2 > 3$.
\medskip

(ii) First note that for $m < (2k+1)p \le p^2$, $\log_p(m-2) < 2$.
From Lemma \ref{p_div1} and (\ref{eqn:c_val}), we see
that it suffices to prove that
$$ \left \lceil {\frac{m-p}{2p}} \right \rceil + 1 - k -
      \lfloor \log_p(m-2) \rfloor \ge 0 \quad \forall m \,
      odd \, \ge (2k+1)p $$
      
For $k=0$, the inequality with the floor and ceiling brackets removed
holds for $m > 2p+1$ by elementary calculus. For $ p \le m \le 2p+1$,
it is clear.

For $k=1$ and $p \ge 5$, the inequality again holds for $ m \ge 5p$ by
calculus and for $3p \le m < 5p$ it is clear.

For $k=1$ and $p=3$, the inequality holds for $ m \ge 3p^2+1$ by
calculus and for $3p \le m < 3p^2+1$ it is again easy to
check directly.
\medskip

(iii) Consider the $px^{pi-1}p^\alpha b_m(x)y^{-m}$ terms that give contributions
to the $\sum_{n \ge 1}$ sum. Expressing $x^{pi-1}b_m(x)$ as $u_r(x)y^{2r}+\ldots
u_0(x)$ with $\deg(u_i) < d$, we must have $r \ge (m-1)/2$ and the contribution
will be expressible in the form $S(dx/y)$ with $\deg(S) = pi-1+\deg(b_m)-d(m-1)/2$.
This last expression must be greater than or equal to $d-1$ for non-trivial reduction 
under {\tt RednB}.
For such $m$, writing $d_m$ for $\deg(b_m)$,
the above and Lemma \ref{p_div1} (ii)
show that the power of $p$ in the denominator of the {\tt RednB} reduction of the
contribution from the index $m$ term is bounded above by
$$\lfloor \log_p(2pi-md+2d_m)\rfloor - 1 - \lceil (m-p)/2p\rceil\qquad
 \mbox{ if } d=2g+1 $$
$$\lfloor \log_p(pi-m(d/2)+d_m+1)\rfloor - 1  - \lceil (m-p)/2p\rceil\qquad
 \mbox{ if } d=2g+2 $$

We have that $d_p = 0$ ($b_p(x)=1$) and $d_m \le d-1$ for $m \ge p+2$. Since $m \ge p$
is odd, the above expressions are maximal when $m=p$.
(a) and (b) follow easily from this.
\end{proof}
\bigskip

The bounds in Lemma \ref{p_div2} (iii) for denominators in the reduction of 
$F_{p*}(x^{i-1}(dx/y))$ are sharp. The proof shows that the first term in the
power series expansion $px^{pi-1}(dx/y^p)$ is the only one that can contribute
to the given maximal power of $p$ and
for a general $Q$ it does indeed lead to denominators equal to the bounds.

Thus, as is readily confirmed in practice by computer computations, we reach the 
following
\bigskip

\noindent\underline{Conclusion:} When $d=2g+1$ and $p > 2g-1$ or $d=2g+2$ and $p > g$,
the transformation matrix $M$ for $F_{p*}$ w.r.t. basis $B_1$ for $H^1_-$ is
$p$-integral. When these equalities for $p$ do not hold however, for a general $Q$,
entries in the lower rows of $M$ have powers of $p$ in the denominator
given by the bounds in the last part of Lemma \ref{p_div2}. 
\bigskip

On the other hand, Lemma \ref{p_div2} shows that {\tt RednA} applied to
$F_{p*}(\omega)$ for $\omega \in B_2$ reduces back to an expression that is always
an $R$-linear combination of the elements of $B_2$, so formally leads to a
$p$-integral transformation matrix $M$.

If $B_2$ gives a basis for $H^1_-$, then this $M$ genuinely represents $F_{p*}$
on that space and $B_2$ can replace $B_1$ as the chosen basis for computations.

Even when $B_2$ doesn't give a basis, this $M$ can still be used. The
above shows that the kernel of $\eta$ and its image in $H^1_-$ are $F_{p*}$- and
hence also $F_*$-stable. 

The following result will be demonstrated in the next section.
 
\begin{proposition}
\label{prop_eta}
\
\begin{itemize}
\item[(i)] $\eta$ is an isomorphism when $d=2g+1$ but has a 1-dimensional kernel and
cokernel when $d=2g+2$.
\smallskip
\item[(ii)] In the latter case, $F_*$ is the identity on $\ker(\eta)$
and acts as multiplication by $q$ on $H^1_-/\myim(\eta)$.
\end{itemize}

\end{proposition}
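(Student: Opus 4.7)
The plan is to analyze $\eta$ via its explicit {\tt RednA} description for part (i), and then identify natural generators of the kernel and cokernel in terms of the geometry at the points at infinity for part (ii).

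For (i), a single {\tt RednA} step gives $\eta(P\,dx/y^3) = (A + 2B')\,dx/y$ where $P = AQ + BQ'$ is the unique decomposition with $\deg B < d$ (which forces $\deg A \le d-2$), and since $\deg(A + 2B') \le d-2$ this is already in the $B_1$-basis form for $H^1_-$. So $P \in \ker\eta$ iff $P = \Psi(B)$ for some $B$, where $\Psi(B) := BQ' - 2B'Q$. The leading term of $\Psi(B)$ at $x^{\deg B + d-1}$ equals $(d - 2\deg B)\cdot\mathrm{lead}(B)$, which is nonzero except in the even case with $\deg B = d/2$. For $d$ odd this forces $B=0$, so $\ker\eta = 0$ and $\eta$ is an iso by dimensions. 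For $d$ even, restrict $\Psi$ to $K[x]_{\le d/2}$: an ODE argument ($\Psi(B)=0$ iff $(B^2/Q)' = 0$, impossible for polynomial $B$ and separable $Q$) shows this restriction is injective, so its image in $K[x]_{\le 3d/2-2}$ has dimension $d/2+1$. The $d/2$ elements $\Psi(x^i)$, $0\le i\le d/2-1$, have nonzero leading coefficients $(d - 2i)$ at the distinct degrees $d-1, d, \ldots, 3d/2-2$, hence project to a basis of the $d/2$-dimensional quotient $K[x]_{\le 3d/2-2}/K[x]_{\le d-2}$. Therefore $\ker\eta = \myim\Psi \cap K[x]_{\le d-2}$ has dimension $(d/2+1) - d/2 = 1$, and rank-nullity gives $\dim(H^1_-/\myim\eta) = 1$.

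For part (ii), first identify the generators. Let $B^*$ be the unique monic degree-$d/2$ polynomial with $\Psi(B^*) \in K[x]_{\le d-2}$; a computation in the local parameter $t = 1/x$ identifies $B^*$ as the polynomial part of the Laurent expansion of $\sqrt{Q}$ at infinity, so $B^*/y$ extends regularly to $\infty_1, \infty_2$ with the distinct values $+1$ and $-1$. Then $\omega^* := \Psi(B^*)\,dx/y^3 = -2\,d(B^*/y)$ spans $\ker\eta$. For the cokernel, introduce the residue functional $\Lambda: H^1_- \to K$, $\Lambda(\omega) := \mathrm{Res}_{\infty_1}(\omega)$ (anti-invariance of $H^1_-$ under the hyperelliptic involution makes $\mathrm{Res}_{\infty_2} = -\mathrm{Res}_{\infty_1}$, so both infinities contribute symmetric information). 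The local expansion $P\,dx/y^3 = O(t^{d/2})\,dt$ for $P \in K[x]_{\le d-2}$ shows $\myim\eta \subseteq \ker\Lambda$, and matching dimensions---using that the basis element $x^{d/2 - 1}\,dx/y$ has nonzero residue---gives $\myim\eta = \ker\Lambda$, so $\Lambda$ induces an isomorphism $H^1_-/\myim\eta \xrightarrow{\sim} K$. Since the Frobenius lift sends the local coordinate $t$ to $t^p$ and fixes each $k$-rational $\infty_i$, a direct computation of $F_p^*(f(t)\,dt) = p\,t^{p-1}\,f^\sigma(t^p)\,dt$ gives $\mathrm{Res}_{\infty_1}(F_{p*}\omega) = p\,\sigma(\mathrm{Res}_{\infty_1}\omega)$; iterating $n$ times yields $\mathrm{Res}_{\infty_1}(F_*\omega) = q\cdot\mathrm{Res}_{\infty_1}(\omega)$, hence $F_*$ acts as multiplication by $q$ on $H^1_-/\myim\eta$.

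For the action on $\ker\eta$, the plan is to compute $F_{p*}(\omega^*) = -2\,d(F_p(B^*/y))$ directly and reduce the result back to $V_2$ via iterated {\tt RednA}. Because Frobenius fixes the $k$-rational infinities and fixes the values $\pm 1$ of $B^*/y$ there, the function $F_p(B^*/y) - B^*/y$ vanishes at both $\infty_1$ and $\infty_2$; tracking this vanishing through the reductions (each of which subtracts a specific exact form of the shape $d(-2B/((m-2)y^{m-2}))$) should show that the reduced image of $F_{p*}(\omega^*)$ equals $\omega^*$, giving $F_*$-eigenvalue $1$ on $\ker\eta$. This last step is the main technical obstacle: the residue computation for the cokernel is essentially coordinate-free and yields $q$ cleanly, but the argument for $\ker\eta$ genuinely requires concrete bookkeeping of the cumulative corrections in the iterated {\tt RednA} reduction in order to distinguish the eigenvalue $1$ on the kernel from the formally similar eigenvalue $q$ on the cokernel.
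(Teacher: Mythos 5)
Parts (i) and the cokernel half of (ii) are correct, and the cokernel argument is genuinely different from the paper's. Where the paper (Lemma~\ref{lb2_3}) extends the base field, translates a root of $Q$ to zero, passes to an odd-degree model $Y^2=\tilde Q(X)$, and invokes Monsky's Main Theorem to identify $\myim(\eta)$ with the image of $H^1(B_k;K)$, you instead introduce the residue functional $\Lambda=\mathrm{Res}_{\infty_1}$, identify $\myim(\eta)=\ker\Lambda$ by a dimension count, and read off the eigenvalue from the Tate-twist equivariance $\mathrm{Res}_{\infty_1}(F_{p*}\omega)=p\,\sigma_p(\mathrm{Res}_{\infty_1}\omega)$, which follows because the lift satisfies $F_p(1/x)=(1/x)^p$ exactly and $1/x$ uniformises $\infty_1$. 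Your route is shorter and avoids both the field extension and the comparison to an odd model; the paper's route has the advantage of reusing Lemma~\ref{lem_zeta} and Lemma~\ref{lb2_1} verbatim and of not requiring the local analysis at infinity. Your identification of $B^*$ as the polynomial part of the Laurent expansion of $\sqrt{Q}$ at infinity is also correct and not in the paper, and your part (i) argument via dimension count on $\Psi|_{K[x]_{\le d/2}}$ is sound, if more roundabout than the paper's one-line leading-term argument ($\deg(SQ'-2S'Q)=\deg S+d-1$ with leading coefficient $(d-2\deg S)$, forcing $d=2\deg S$).

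The kernel half of (ii) is a genuine gap, and you flag it yourself. Your plan --- track the vanishing of $F_p(B^*/y)-B^*/y$ at $\infty_1,\infty_2$ through the iterated {\tt RednA} subtractions $d(-2B/((m-2)y^{m-2}))$ --- is not obviously workable, since each reduction step mixes the terms and there is no clean invariant that survives the bookkeeping. The paper's argument (Lemma~\ref{lb2_2}) sidesteps the reductions entirely: it integrates the eigenvalue equation to obtain $F(S/y)=\lambda(S/y)+B_1/y^3+B_3/y^5+\cdots$ in $(A^\dagger_K)^-$ (the integration constant vanishes by parity), then computes the $1/y$-coefficient of $F_p(S/y)$ directly by writing $S(x^p)=u_ry^{2r}+\cdots$ with $r=(p-1)/2$ and $u_r$ monic of degree $g+1$, iterates to $F=F_p^n$, and matches monic leading coefficients to force $\lambda=1$. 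In fact your own toolkit closes the gap more cheaply than your sketch: once you have the integrated equation above, evaluate both sides at $\infty_1$. The lower-order terms $B_j/y^{2j+1}$, $j\ge 1$, vanish there (they are $O(t^{g+2})$), so the right side gives $\lambda\cdot(S/y)(\infty_1)=\lambda$, while the left side gives $\sigma_p^n\bigl((S/y)(\infty_1)\bigr)=1$ by your observation that $F$ fixes the value at the $k$-rational point $\infty_1$. This is exactly dual to your residue argument for the cokernel and avoids the {\tt RednA} bookkeeping altogether; but as written, your proposal does not carry it out.
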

\medskip

This justifies the adaptation of Kedlaya's algorithm given in Section 2, which always
works with a $p$-integral $M$. In summary:
\bigskip

\noindent\underline{New Algorithm}
\begin{itemize}
\item $d=2g+1$. If $p \ge 2g$ then the algorithm is unchanged. If $p < 2g$ then the
algorithm is as before, but use differential basis $B_2$ instead of $B_1$.
\item $d=2g+2$, $p > g$. Apply the algorithm as for odd $d$ with differential
basis $B_1$. At the end, remove a factor $t-q$ from the characteristic polynomial of
$F_*$.
\item $d=2g+2$, $p \le g$. Formally apply the algorithm as for odd $d$ with 
pseudo-basis $B_2$. At the end, remove a factor $t-1$ from the characteristic 
polynomial of $F_*$.
\end{itemize}
\bigskip

\noindent\underline{Efficiency} If $N_1 = \lceil (ng/2)+\log_p(2{2g\choose g}) \rceil$
($q=p^n$) and $N = N_1+\lfloor\log_p(2N_1)\rfloor + 1$, then estimates using Lemma
\ref{p_div2} and the Weil bound show that it suffices to compute $(1+(Q^\sigma(x^p)-
Q(x)^p)y^{-2p})^{-(2k+1)/2}$ to accuracy $p^N$ in order that $M$ is of sufficient
$p$-adic accuracy to determine $P_C(t)$. Here, $k=0$ if we use $B_1$ and $k=1$ for
$B_2$. Using $k=1$ rather than $k=0$ makes virtually no difference in computational
efficiency here, and the reduction of $F_{p*}(\omega)$ back to a linear combination
of basis elements is in fact slightly better when using $B_2$.

However, $d=2g+2$ rather than $2g+1$ does increase the size of the bases by 1 element,
meaning that one extra reduction of a $F_{p*}(\omega)$ has to be performed. Also the
$(d-1) \times (d-1)$ matrix $M$, which has to be $\sigma$-powered to the $n$th power,
has an extra row and column. This does make a small difference (more so for smaller
$g$), which makes it worth looking for a $k$-rational root of $Q(x)$ and moving that to
$\infty$ to transform to $d=2g+1$. In general, though, no such transformation is
possible.
\bigskip

\section{Proof of Proposition \ref{prop_eta}}
\medskip

\noindent Proposition~\ref{prop_eta} of the last section on the $\eta$ map is proven in the 
following three lemmas. 
\medskip

\begin{lemma}
\label{lb2_1}
\
If $d = 2g+1$ then $\eta$ is an isomorphism onto $H^1_-$.

If $d=2g+2$ then $\eta$ has a one dimensional kernel generated by
$V(dx/y^3)=d(-2S/y)$ where $V = SQ'-2S'Q$ and $S = x^{g+1}+\ldots \in K[x]$
is the unique monic degree $g+1$ polynomial such that $V$ is of degree $\le 2g$.
\end{lemma}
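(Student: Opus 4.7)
The plan is to apply Kedlaya's {\tt RednA} reduction once to an arbitrary element $P(x)(dx/y^3) \in V_2$ and read off the kernel of $\eta$ from the result. For $P$ with $\deg P \le d-2$, write $P = AQ + BQ'$ with the canonical choice $\deg A < d-1$ and $\deg B < d$; differentiating $-2B/y$ using $y^2 = Q$ and $dy = (Q'/2y)\,dx$ gives the identity
$$ P(dx/y^3) \;=\; (A + 2B')(dx/y) \;-\; d(2B/y). $$
Since $\deg(A + 2B') \le d-2$, the form $(A + 2B')(dx/y)$ is a $K$-linear combination of elements of $B_1$, so its class in $H^1_-$ vanishes iff $A + 2B' = 0$ as a polynomial. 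Thus $\eta(P(dx/y^3)) = 0$ precisely when $P = BQ' - 2B'Q$ for some $B \in K[x]$ with $\deg B \le d-1$ and $\deg(BQ' - 2B'Q) \le d-2$. This first step is the delicate one: it is essential that $A + 2B'$ have degree $\le d-2$ so that its class can be read off against the basis $B_1$; the canonical choice of $(A,B)$ is exactly what guarantees this.

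Next I would analyse the $K$-linear map $L : \{B \in K[x] : \deg B \le d-1\} \to K[x]$ defined by $L(B) = BQ' - 2B'Q$. Since $Q$ is monic of degree $d$, a direct computation gives $L(x^s) = (d-2s)\,x^{s+d-1} + (\text{lower order terms})$, so the composition of $L$ with the projection onto $K[x]_{\le 2d-2}/K[x]_{\le d-2}$ has a triangular matrix in the natural monomial bases with diagonal entries $d - 2s$, $s = 0, 1, \ldots, d-1$. Working over $K$ of characteristic zero, the only diagonal entry that can vanish is at $s = d/2$, which happens iff $d$ is even.

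For $d = 2g+1$ the matrix is nonsingular, so the kernel is trivial; combined with $\dim V_2 = d-1 = 2g = \dim H^1_-$, this forces $\eta$ to be an isomorphism. For $d = 2g+2$, only the entry at $s = g+1$ vanishes, and back-substitution forces $b_{d-1} = \cdots = b_{g+2} = 0$ while $b_g, \ldots, b_0$ are uniquely determined by the free parameter $b_{g+1}$; setting $b_{g+1} = 1$ produces the unique monic polynomial $S$ of degree $g+1$ claimed in the statement. To verify that $V := SQ' - 2S'Q$ is itself nonzero (so the kernel really is one-dimensional in $V_2$), note that $V = 0$ would force $S^2 = cQ$ for some $c \in K$, contradicting separability of $Q$. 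The identity $d(-2S/y) = V(dx/y^3)$ then follows from the same differentiation carried out in the first paragraph, pinning down $\ker\eta$ as the one-dimensional span of $d(-2S/y)$.
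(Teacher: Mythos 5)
Your proof is correct and follows essentially the same route as the paper: apply {\tt RednA} once, deduce that kernel elements correspond to polynomials of the form $BQ'-2B'Q$ of degree $\le d-2$, and then use the leading-term/triangularity computation (diagonal entries $d-2s$) to show that this forces $d$ even with a one-dimensional kernel parametrised by the monic $S$. Your presentation via the explicit map $L$ and the quotient $K[x]_{\le 2d-2}/K[x]_{\le d-2}$ is a slightly more systematic packaging of the paper's inductive argument on coefficients, and your separability check that $V\neq 0$ makes explicit a point the paper leaves implicit.
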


\begin{proof}
Using the fact that $B_1$ is a basis for $H^1_-$ and the {\tt RednA} formula,
we see that an element of the kernel of $\eta$ corresponds to a differential
of the form $V(dx/y^3)$ with $\deg(V) \le d-2$ and $V=SQ'-2S'Q$.

If $V = a_rx^r+\ldots$ with $r \ge 0$, $a_r \neq 0$, then the leading term of
$SQ'-2S'Q$ is $(d-2r)a_rx^{d+r-1}$, so $d$ must equal $2r$. So, $d=2g+2$
and $r = g+1$. Normalising $S$ so that its leading coefficient is $1$, it follows
easily that its lower coefficients are completely
determined by the condition on $\deg(V)$. Explicitly, if $b_i$ is the coefficient of
$x^i$ in $S$, then the condition that the coefficient of $x^{d+i-1}$ in  $SQ'-2S'Q$
is zero, $0 \le i \le g$, translates into
$$ (2g+2-2i)b_i = \mbox{ some linear combination of $b_j, j \ge i+1$}$$
This determines the $b_i$ inductively and gives a unique $S$ and  $V$ up to $K$-scaling.
\end{proof}
\medskip

\begin{lemma}
\label{lb2_2}
\
When $d=2g+2$, $F_*$ acts trivially on $\ker(\eta)$.
\end{lemma}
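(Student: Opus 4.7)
By Lemma~\ref{lb2_1}, $\ker(\eta)$ is spanned by $\omega_0 = V(dx/y^3) = d(-2S/y)$, where $S\in K[x]$ is monic of degree $g+1$. Since $y/x^{g+1}\to\pm 1$ at the two $k$-rational points at infinity, $-2S/y$ takes values $-2$ and $+2$ respectively at their Teichm\"uller lifts $\tilde\infty_1,\tilde\infty_2$. Because $F_p^\dagger$ commutes with $d$, we have $F_{p*}(\omega_0) = d(F_p^\dagger(-2S/y))$ in $\tilde\Omega^1$, and by Lemma~\ref{p_div2}(i) its standard expansion contains only $y^{-n}$ terms with $n\ge 3$, so {\tt RednA} cleanly reduces it to an element of $V_2$. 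That element must lie in $\ker(\eta) = K\omega_0$ (since $\eta F_{p*}(\omega_0) = F_*\eta(\omega_0) = 0$), giving $F_{p*}(\omega_0) = \lambda_p\omega_0$ for some $\lambda_p\in K$. Since $F_* = F_{p*}^n$ is $\sigma_p$-iterated, $F_*(\omega_0) = N_{K/\Q_p}(\lambda_p)\,\omega_0$, so the lemma reduces to showing $\lambda_p = 1$.

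The key geometric input is that Frobenius fixes the two $k$-rational points at infinity and so preserves the values of $-2S/y$ there. Writing $F_p^\dagger(y) = y^p(1+E)^{1/2}$ for $E = (Q^{\sigma_p}(x^p)-Q(x)^p)/y^{2p}$, the numerator of $E$ has $x$-degree less than $p(2g+2)$, so $E\to 0$ at each lifted infinity. Hence $F_p^\dagger(y)/x^{p(g+1)} = (y/x^{g+1})^p(1+E)^{1/2}\to(\pm 1)^p = \pm 1$ (using that $p$ is odd) and $S^{\sigma_p}(x^p)/x^{p(g+1)}\to 1$, yielding $F_p^\dagger(-2S/y) = -2\,S^{\sigma_p}(x^p)/F_p^\dagger(y) \to \mp 2$ at $\tilde\infty_{1,2}$, the same values as $-2S/y$. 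Setting $h := F_p^\dagger(-2S/y) + 2S/y\in A^\dagger_{K,-}$, we obtain $h(\tilde\infty_1) = h(\tilde\infty_2) = 0$ and $F_{p*}(\omega_0) = \omega_0 + dh$, so the {\tt RednA}-reduction of $dh$ in $V_2$ equals $(\lambda_p - 1)\omega_0$.

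The main obstacle is therefore to show that this reduction vanishes. The plan is to establish that the $K$-linear functional $f\mapsto (\text{coefficient of }\omega_0\text{ in the reduction of }df)$, defined on those $f\in A^\dagger_{K,-}$ for which the values at $\tilde\infty_{1,2}$ and the {\tt RednA} reduction of $df$ make sense, is given by the formula
$$\bigl(\text{reduction of } df \text{ in } V_2\bigr) \;=\; \tfrac{1}{4}\bigl(f(\tilde\infty_2) - f(\tilde\infty_1)\bigr)\,\omega_0.$$
Both sides are $K$-linear in $f$, and the normalisation is pinned down by $f = -2S/y$ (where $df = \omega_0$ already lies in $V_2$ and the infinity-difference is $4$); the remaining content is that the left side vanishes on $f$ with $f(\tilde\infty_{1,2}) = 0$. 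This will be verified by expanding $df$ in standard form using the series for $(1+E)^{-1/2}$ and tracking each $y^{-n}$ term through the iterative {\tt RednA} steps, using that the vanishing at both infinities leaves no $V\cdot dx/y^3$ contribution to survive in the final reduction. Applying the formula to $f = h$, we conclude $\lambda_p = 1$, whence $F_*(\omega_0) = \omega_0$ and $F_*$ acts as the identity on $\ker(\eta)$.
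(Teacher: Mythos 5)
Your overall geometric idea is the same as the paper's: the eigenvalue is detected by looking at the behaviour of $-2S/y$ "at infinity," and Frobenius preserves this behaviour. However, there is a genuine gap at the crucial step. You assert the formula
$$\bigl(\text{reduction of } df \text{ in } V_2\bigr) \;=\; \tfrac{1}{4}\bigl(f(\tilde\infty_2) - f(\tilde\infty_1)\bigr)\,\omega_0$$
and then say "This will be verified by expanding $df$ in standard form \dots\ and tracking each $y^{-n}$ term through the iterative {\tt RednA} steps." That sentence is the entire computational content of the proof, and it is not carried out. As written, the argument checks the formula only on the single element $f=-2S/y$ (which fixes the normalisation), but it never establishes that the functional vanishes on $f$ with $f(\tilde\infty_1)=f(\tilde\infty_2)=0$, which is exactly what is needed. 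Without that, you cannot conclude that $\lambda_p=1$.

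The paper avoids needing any such boundary-value formula. It passes from the differential equation $F_*(\omega)=\lambda\omega - 2d(f)$ to the function equation $F(S/y)=\lambda(S/y)+\bigl(B_1/y^3 + B_3/y^5 + \dots\bigr)$ in $A^\dagger_K$, then computes the $1/y$-coefficient of $F_p(S/y)$ directly: writing $S(x^p)=u_r(x)y^{2r}+\dots+u_0(x)$ with $r=(p-1)/2$, one checks that $u_r$ is monic of degree $g+1$, and combined with $F_p(1/y)=y^{-p}(1+a_2/y^2+\dots)$ this gives $F_p(S/y)=(x^{g+1}+\dots)/y + b_3/y^3 + \dots$. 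Iterating $n$ times, $F(S/y)$ has a monic degree-$g+1$ coefficient of $1/y$, and comparing with $\lambda S(x)$ (also monic of degree $g+1$) forces $\lambda=1$. This leading-coefficient comparison is the precise, finished version of what your "values at infinity" heuristic is gesturing at. Two smaller remarks: your reduction to the single-$F_{p*}$ eigenvalue $\lambda_p$ via $F_*(\omega_0)=N_{K/\Q_p}(\lambda_p)\omega_0$ is valid and a little cleaner than iterating, but note that it proves a stronger statement than strictly required ($\lambda_p=1$ rather than $N(\lambda_p)=1$), which is fine. Also, the assertion that $E=(Q^{\sigma_p}(x^p)-Q(x)^p)/y^{2p}\to 0$ at infinity is correct, but relies on $a_d=1$ (so the two degree-$p(2g+2)$ terms cancel); it is worth stating that explicitly.
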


\begin{proof}
From the last lemma, $\ker(\eta)$ is 1-dimensional and generated by $\omega =
V(dx/y^3)=d(-2S/y)$ with $S = x^{g+1}+\dots$. As $\ker(\eta)$ is $F_*$-stable,
$\omega$ is an eigenvector for $F_*$ with eigenvalue $\lambda$, say. We must
show that $\lambda=1$.
\medskip

Considering the images in $H^1_-$ and using Lemma \ref{p_div2} (ii), we get
$$ F_*(\omega) = \lambda\omega - 2d(f) \quad f = \sum_{r=1}^\infty {B_r(x) \over
y^{2r+1}} \in (A^\dagger_K)^-
 \Rightarrow\quad d(F\left({S\over y}\right)) = \lambda d\left({S\over y}\right)+
d\left({B_1 \over y^3}+{B_3 \over y^5}\ldots\right)$$
So
\begin{equation}\label{eqn:Feqn}
   F\left({S\over y}\right) = \lambda\left({S\over y}\right) +
   \left({B_1 \over y^3}+{B_3 \over y^5}\ldots\right) \in (A^\dagger_K)^-
\end{equation}
In fact, this equality is true up to addition of a constant in $K$, but as both
sides are in the $-$ eigenspace, the constant must be zero. The $B_i$ here have
degree $< d$.
\medskip

Now, as in the proof of Lemma \ref{p_div2}, we see that if the standard expansion
of $f \in A^\dagger_K$ is of the form $\sum_{n \ge 3} a_n(x)/y^n$, then $F_p(f)$
has the same property.

Also, expanding $S(x^p)$ as $u_r(x)Q(x)^r+\ldots +u_0(x) = u_r(x)y^{2r}+\ldots +u_0(x)$
with $\deg(u_i) < d$, we easily get that $r=(p-1)/2$ and $u_r(x) = x^{g+1}+\dots$.

Then, using $F_p(1/y) = y^{-p}(1+a_2(x)/y^2+a_4(x)/y^4+\ldots)$, we find that
$F_p(S/y) = S_1(x)/y + b_3(x)/y^3 + \ldots$ with $S_1(x) = x^{g+1}+\ldots$.
Iterating, we see that the same holds for $F(S/y)$. Then, (\ref{eqn:Feqn}) implies
that $\lambda = 1$.
\end{proof}
\medskip

\begin{lemma}
\label{lb2_3}
\
When $d=2g+2$, $F_*$ acts on $H^1_-/\myim(\eta)$ as multiplication by $q$.
\end{lemma}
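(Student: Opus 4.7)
The plan is to identify $H^1_-/\myim(\eta)$ with $K$ via the residue at $\infty_+$ and to check that $F_*$ scales residues by $q$. Using $t = 1/x$ as the local parameter at the $k$-rational point $\infty_+$, and the normalisation $a_d = 1$ (with $d = 2g+2$), the equation $y^2 = Q(x)$ gives $y = t^{-(g+1)}(1 + O(t))$ on the $\infty_+$ branch. Any $\omega \in \tilde{\Omega}^1_{\Ad_K/K}$ therefore admits a formal Laurent expansion $\omega = f(t)\,dt$ in the completion at $\infty_+$; define $\mathrm{Res}(\omega)$ as the coefficient of $t^{-1}$. Because an exact form $d\xi = \sum n a_n t^{n-1}\,dt$ has no $t^{-1}$ term, this descends to a well-defined $K$-linear functional $\mathrm{Res}: H^1_-(A_k;K) \to K$.

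The first step is to show $\myim(\eta) \subseteq \ker(\mathrm{Res})$ by direct expansion of the pseudo-basis. For $0 \le i \le 2g$,
\[ x^i\,dx/y^3 = -\,t^{\,3g+1-i}\bigl(1 + O(t)\bigr)\,dt, \]
which vanishes to order at least $g+1 \ge 1$ at $\infty_+$, so has residue $0$. On the other hand the class $\omega_0 = x^g\,dx/y \in H^1_-$ expands as $-t^{-1}(1 + O(t))\,dt$, giving $\mathrm{Res}(\omega_0) = -1 \ne 0$. By Lemma~\ref{lb2_1} and a dimension count, $\myim(\eta)$ has codimension $1$ in $H^1_-$, and $\ker(\mathrm{Res})$ is also of codimension $1$ and contains $\myim(\eta)$, so the two coincide. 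Hence $\mathrm{Res}$ identifies $H^1_-/\myim(\eta) \cong K$.

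Next, compute the effect of $F_* = F_{p*}^n$ on residues. Because $F_{p*}(x) = x^p$, the composite satisfies $F(x) = x^q$ exactly, so its continuous extension to the formal completion at the ($F$-fixed, $k$-rational) point $\infty_+$ sends $t = 1/x$ to $t^q$. A differential $\omega = f(t)\,dt$ is thus transformed into $f(t^q)\cdot q\,t^{q-1}\,dt$, whose $t^{-1}$ coefficient is $q$ times the $t^{-1}$ coefficient of $\omega$ (from matching $nq+q-1 = -1$, which forces $n = -1$). Therefore $\mathrm{Res}(F_*\omega) = q\cdot\mathrm{Res}(\omega)$ for every $\omega \in H^1_-$, and combined with the first step, $F_*\omega - q\omega \in \ker(\mathrm{Res}) = \myim(\eta)$. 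This means $F_*$ acts as multiplication by $q$ on $H^1_-/\myim(\eta)$, as required.

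The main technical point is to justify the formal-residue machinery and the extension of $F$ in the Monsky-Washnitzer setting. Kedlaya's explicit description of $\Ad$ as series $\sum S_n(x)y^n$ with growth conditions makes the termwise Laurent expansion at $\infty_+$ well-defined (using the binomial expansion of $y^n$ from $y = \pm t^{-(g+1)}(1+O(t))$), and the weak-completion conditions give the convergence needed for residues to be independent of representative. Continuity of $F$ together with the identity $F(x) = x^q$ then pins down the extension of $F$ to the completion by $F(t) = t^q$, reducing the residue identity to the elementary formal calculation used above.
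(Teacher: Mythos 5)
Your proof is correct, and it takes a genuinely different route from the paper's. Both proofs begin by identifying $\myim(\eta)$ with the codimension-one residue-kernel in $H^1_-$, and your dimension count matches the paper's. The difference is in how the eigenvalue $q$ on the quotient is produced. The paper passes to a finite extension of $k$, moves a root of $Q$ to zero, writes down an explicit odd-degree model $B_k \hookrightarrow A_k = B_k[1/X]$, and invokes the Main Theorem of \cite{Mon68} to conclude that $\myim(H^1(B_k;K)^-) = \myim(\eta)$; the eigenvalue $q$ on the quotient is then forced by the known odd-degree spectrum and the archimedean size of the $\alpha_i$. You instead compute directly that $F_*$ scales the residue functional by $q$, via the observation that $F$ acts on the local expansion at $\infty_+$ by $t\mapsto t^q$ since $F(x)=x^q$ exactly. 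Your argument is more self-contained (no change of model, no appeal to Monsky's localization theorem) and arguably more transparent, while the paper's version yields the extra information that the eigenvalues of $F_*$ on $\myim(\eta)$ are precisely the roots of $P_C(t)$.

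One point you should make explicit rather than assert: you need the compatibility $\Lambda\circ F = \sigma_q^*\circ\Lambda$, where $\Lambda$ is the Laurent-expansion map at $\infty_+$ and $\sigma_q^*(f)(t) = f(t^q)$. The statement ``continuity of $F$ together with $F(x)=x^q$ pins down the extension'' presupposes that $F$ extends to the completed local ring at $\infty_+$, which is not automatic because $\infty_+$ is not a point of $\operatorname{Spec}(A_k)$. The clean way to close this is to verify the identity on the generators $x,\,y,\,y^{-1}$ and appeal to continuity and the ring-homomorphism property. For $x$ it is immediate. For $y$ it requires a sign check: $\Lambda(F(y))$ and $\Lambda(y)(t^q)$ are both square roots of $Q(1/t^q)$, and they agree because Kedlaya's lift has $F(y) = y^q(1+\cdots)^{1/2}$ with the series starting at $1$, so both have leading term $+\,t^{-q(g+1)}$. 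This matters: if the signs disagreed you would get $\mathrm{Res}(F_*\omega) = -q\,\mathrm{Res}(\omega)$ on $H^1_-$ and the lemma would fail.
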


\begin{proof}
We already know that $\myim(\eta)$ is an $F_*$-stable codimension 1 subspace of
$H^1_-$ and that the eigenvalues of $F_*$ on $H^1_-$ are $q$ and the roots of
$P_C(t)$, the numerator of the zeta-function of $C$. We need to show that
the eigenvalues of $F_*$ on $\myim(\eta)$ are precisely these latter roots.

We will prove the lemma by using an isomorphism to an odd degree model over
an extension $\F_{q^r}$ of $k$ where $Q \in k[x]$ has a root. In fact, replacing
$F$ by $F^r$ corresponds to replacing the basefield $k=\F^q$ by $k_1=\F^{q^r}$
and the roots of $P_{C/k_1}(t)$ are the $r$th powers of the roots of $P_{C/k}(t)$.
These latter roots have absolute value $q^{r/2}$ in every complex embedding whereas
$q^r$ obviously has absolute value $q^r$. So we can assume that $Q$ has a root in $k$.
\medskip

First note that
$$ \myim(\eta) = \{ \omega \in H^1_-\quad |\quad \mbox{Residue}_{\infty_1}(\omega) =
 \mbox{Residue}_{\infty_2}(\omega) = 0 \} $$
as both sides of the equality have codimension $1$ in $H^1_-$ and the LHS lies
in the RHS (in fact, all differentials of the form $x^i(dx/y^3)$, $i \le d-2$
are holomorphic at both points at infinity).
\medskip

We can translate a root of $Q(x)$ to zero by a $x \mapsto x-\alpha$ translation
(this changes the lift of $F$ but not $\myim(\eta)$), so assume that $Q(x) = x^{2g+2}
+a_{2g+1}x^{2g+1}+\ldots +a_1x \in k[X]$, $a_1 \neq 0$.

Let $\tilde{Q}(X) = X^{2g+1}+(a_2/a_1^2)X^{2g}+\ldots +(1/a_1^{2g+2})$.

The equation $Y^2 = \tilde{Q}(X)$ defines a new smooth, odd-degree affine model
for $C$ and we have
$$ B_k := {k[X,Y,Y^{-1}]\over(Y^2-\tilde{Q}(X))} \hookrightarrow A_k = 
 {k[x,y,y^{-1}]\over(y^2-Q(X))} \quad X \mapsto 1/(a_1x),\; Y \mapsto 
y/(a_1x)^{g+1}$$
[note: $1/(a_1x) = (1/(a_1y^2))(a_1+a_2x+\ldots) \in A_k$].
Letting $B^\dagger$ be the smooth lift of $B_k$ corresponding to the lift to
$R[X]$ of $\tilde{Q}$ with the coefficient lift compatible with that of $Q$,
we get the corresponding commutative diagram
$$ \begin{CD}
  B^\dagger @>>> A^\dagger\\
  @V{F^{(1)}}VV   @VV{F^{(2)}}V\\
  B^\dagger @>>> A^\dagger
\end{CD} $$
for some choice of $q$-Frobenius lifts $F^{(1)}$ and $F^{(2)}$. All maps commute
with the automorphisms induced by the hyperelliptic involution.
\medskip

One easily sees that $A_k = B_k[1/X]$. The Main Theorem of \cite{Mon68} implies that
$$ H^1(B_k;K) \hookrightarrow H^1(A_k;K) = H^1$$
with image the $K$-subspace of elements with residues $0$ at $\infty_1$ and
$\infty_2$, the images of points with $X = 0$ under the automorphism of $C$
induced from $B_k \hookrightarrow A_k$. [In fact, a bit of computation verifies the
residue condition directly from the explicit maps].
\medskip

Thus $\myim(H^1(B_k;K)^-) = \myim(\eta)$ and as we know that the eigenvalues of $F_*$
on $H^1(B_k;K)^-$ are the roots of $P_C(t)$ (the odd degree case), the result follows.
\end{proof}

\bibliographystyle{amsalpha}
\bibliography{mike.bib}
\end{document}